\newtheorem{theorem}{Theorem}
\newtheorem{proof}{Proof}
\newtheorem{proposition}{Proposition}
\newtheorem{remark}{Remark}
\numberwithin{equation}{section}
\journal{...................}
\begin{document}

\begin{frontmatter}
\title{Complete synchronization of the Newton--Leipnik reaction diffusion chaotic system}
\author[a]{Samir Bendoukha}
\author[b]{Salem Abdelmalek}
\address[a]{Electrical Engineering Department, College of Engineering at Yanbu, Taibah University, Saudi Arabia. E-mail address: sbendoukha@taibahu.edu.sa}
\address[b]{Department of Mathematics and Computer Science, Laboratory of Mathematics, Informatics and Systems (LAMIS), University of Larbi Tebessi, Tebessa, 12002 Algeria.E-mail address: salem.abdelmalek@univ-tebessa.dz}
\begin{abstract}
In this paper we investigate the reaction--diffusion system corresponding to
the Newton--Leipnik chaotic system originally developed to model the rigid
body motion through linear feedback (LFRBM). We develop a nonlinear
synchronization scheme for the proposed reaction--diffusion system and prove
its global stability in the local sense by means of the eigenvalues of the
Jacobian and global sense through an appropriate Lyapunov functional. A
numerical example is presented to illustrate the results of this study.
\end{abstract}
\begin{keyword}
Newton--Leipnik \sep reaction diffusion \sep chaotic systems \sep chaos synchronization \sep complete synchronization.
\end{keyword}
\end{frontmatter}

\section{Introduction\label{SecIntro}}

Chaotic dynamical systems have attracted a lot of attention in the last few
decades. The term chaos refers to the seemingly random behavior exhibited by
some deterministic dynamical systems. Although the rajectories of such a
system seem random, they are in fact completely deterministic and can be
exactly reproduced given the same initial conditions. The phase--space
trajectories of a chaotic system are extremely sensitive to variations in
the initial conditions. This interesting property has motivated the
application of this type of systems in a wide range of scientific and
engineering disciplines \cite%
{Kapitaniak2000,Banerjee2013,Curry2012,Aihara2012}. Interest in chaos became
more apparent after the revolutionary synchronization work \cite%
{Yamada1983,Yamada1984,Afraimovich1983,Pecora1990}, which demonstrated that
two chaotic systems with completely different initial conditions can be
forced to mimic one another and follow the same path.

Most of the studies dealing with chaos consider an ODE system where the
dependent variables represent the evolution of some scalar physical
quantities, such as the concentration of a substance or the motion of a
robot arm, over time. One of the first application of chaos was in modelling
and understanding the laminar and turbulent flow of fluids. In \cite%
{Cross1993}, the authors argued that since fluid, for instance, consists of
a continuum of hydrodynamic modes and thus should be described by a
spatially extended system of equations, or a reaction--diffusion system,
rather than and ODE\ system. In their work, they examined some of the
dynamics of such systems and considered as examples the complex
Ginzburg--Landau equation and the Kuramoto--Sivashinsky equation.

In \cite{Lai1994}, spatio--temporal chaotic systems were examined. The
authors showed that the general chaotic behavior is similar to the ODE case
in the sense that the system is extremely sensitive to changes in the intial
states as well as the system's parameters. Parekh et al. \cite{Parekh1997}
studied the control of autocatalytic reaction--diffusion system. They
devised a synchronization scheme and established the convergence of the
error by means of appropriate Lyapunov functionals. An interesting summary
of the chaos of reaction--diffusion systems is given in \cite{Zelik2007}.

In the last decade, numerous studies have been carried out on the
stabilization and synchronization of spatio--temporal chaotic systems. It
has been observed that Neural networks exhibit the dynamics of chaos. Neural
networks have many applications across a variety of fields including
communications and control. Studies considering the synchronization of
neural networks include \cite{Wang2007,Yu2011,Yang2013}. Recent studies have
also examined spatio--temporal chaos in other systems such as predator--prey
models \cite{Hu2015} and the FitzHugh--Nagumo model \cite%
{Zaitseva2016,Zaitseva2017}.

In this paper, we are interested in the synchronization of the
spatio--temporal Newton--Leipnik chaotic system \cite{Leipnik1981}, which
has a double strange attractor. The following section of this paper will
recall the conventional Newton--Leipnik and some of its main dynamics
including the equilibrium solutions and their asymptotic stability. Section %
\ref{SecSync} presents the proposed control laws for the complete
synchronization of a pair of Newton--Leipnik systems. The local and global
asymptotic stability of the resulting synchronization error system are
investigated through the eigenvalues of the Jacobian and the direct Lyapunov
method. Section \ref{SecApp} presents a numerical example whereby the
proposed control strategy is applied and the slave states are shown to
converge towards the master states uniformly in space. Finally, Section \ref%
{SecSum} summarizes the results of the study.

\section{The Newton--Leipnik ODE\ Model\label{SecModel}}

\subsection{System Model}

The use of differential equations to model the motion of a rigid object such
as a robot or an aircraft has been around for decades. In \cite{Leipnik1981}%
, the authors started with a simple model of a rigid body in $3$%
--dimensional space where the body's center of mass is taken as the origin.
They assumed that the density function of the body and its moments of
intertia are known. Assuming the rigid body is a jet, for instance, the
authors considered a stabilization scheme for the rigid body motion through
linear feedback (LFRBM) of the torques. Through simple manipulation and
normalization of the model, they were able to simplify it to the well known
Newton--Leipnik form given by%
\begin{equation}
\left \{
\begin{array}{l}
\frac{du_{1}}{dt}=-au_{1}+u_{2}+10u_{2}u_{3}, \\
\frac{du_{2}}{dt}=-u_{1}-0.4u_{2}+5u_{1}u_{3}, \\
\frac{du_{3}}{dt}=\alpha u_{3}-5u_{1}u_{2},%
\end{array}%
\right.  \label{2.1}
\end{equation}%
where $u_{i}$ are functions of time and $\alpha $ is a parameter. The
dynamical system (\ref{2.1}) is chaotic in the sense that it has a positive
Lyapunov exponent, meaning that for an extremely small perturbation in the
initial conditions, the system follows a new trajectory that diverges from
the previous one at an exponential rate. Many studies that examined the
Newton--Leipnik system have shown that subject to specific values of $a$ and
$\alpha $ such as $a=-0.4$ and $\alpha =0.175$, which were considered in
\cite{Leipnik1981}, the system has a strange attractor with two equilibria.
Figures \ref{Fig1} and \ref{Fig2} show the states and trajectories,
respectively, for the initial data%
\begin{equation}
\left( u_{1},u_{2},u_{3}\right) ^{T}=\left( 0.349,0,-0.3\right) ^{T}.
\label{2.2}
\end{equation}%
It is easy to see that the system has a double strange attractor, which is
an interesting property.

\begin{figure}[tbph]
\centering \includegraphics[width=3.5in]{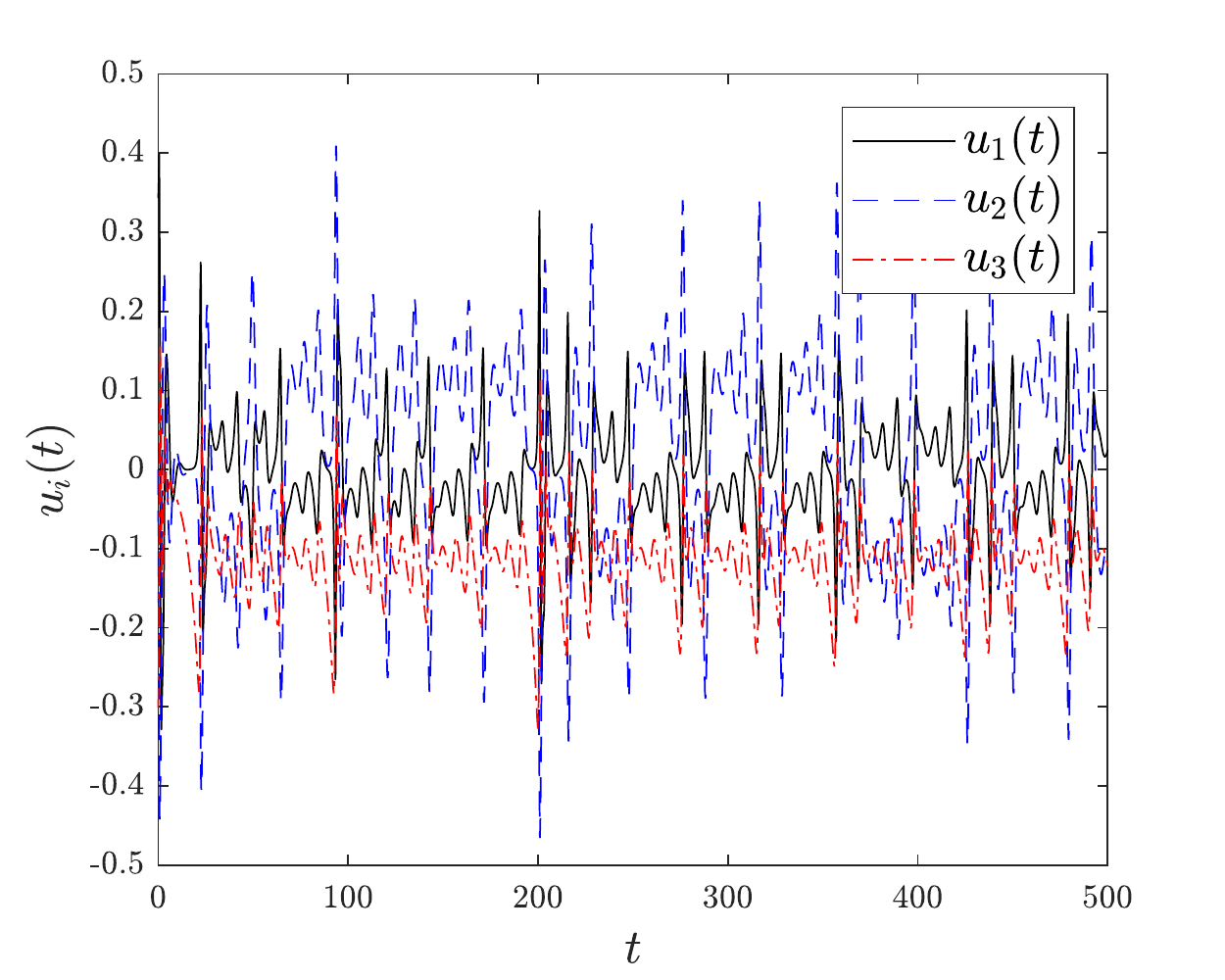}
\caption{States of the Newton--Leipnik system as functions of time for $%
\protect\alpha =0.175$ and initial conditions (\protect\ref{2.2}).}
\label{Fig1}
\end{figure}

\begin{figure}[tbph]
\centering \includegraphics[width=\textwidth]{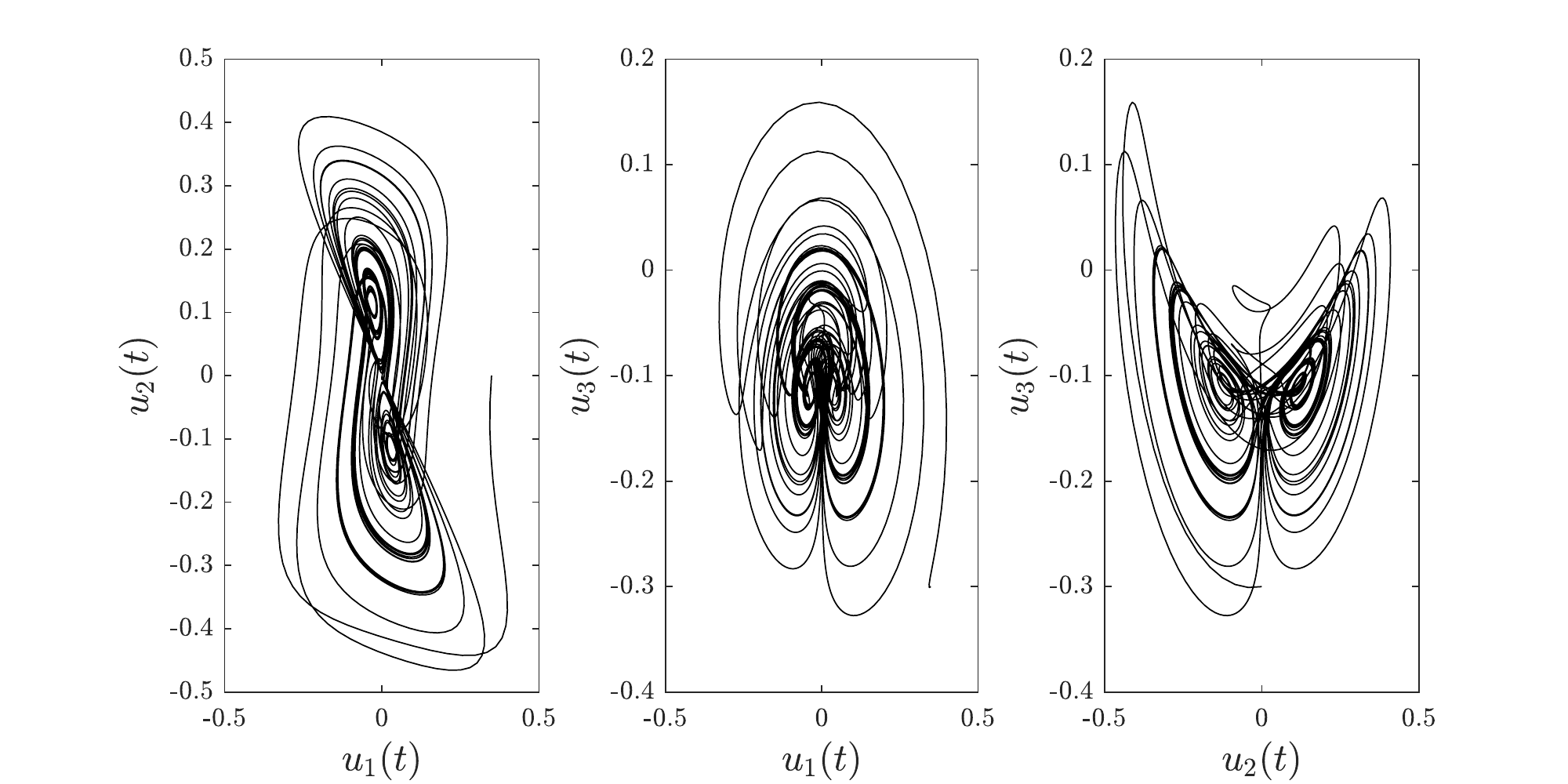}
\caption{Phase plots of the Newton--Leipnik system for $\protect\alpha %
=0.175 $ and initial conditions (\protect\ref{2.2}).}
\label{Fig2}
\end{figure}

\subsection{Dynamics of the ODE System}

In this section, we would like to study the main dynamics of the ODE chaotic
system (\ref{2.1}). We start by determining the equilibrium points and then
study the stability of the these points. Let us define the functions%
\begin{equation}
\left \{
\begin{array}{l}
f_{1}\left( u_{1},u_{2},u_{3}\right) =-au_{1}+u_{2}+10u_{2}u_{3} \\
f_{2}\left( u_{1},u_{2},u_{3}\right) =-u_{1}-0.4u_{2}+5u_{1}u_{3} \\
f_{3}\left( u_{1},u_{2},u_{3}\right) =\alpha u_{3}-5u_{1}u_{2}%
\end{array}%
\right.  \label{2.3}
\end{equation}%
The divergence of the vector field $f$ on $%
%TCIMACRO{\U{211d} }%
%BeginExpansion
\mathbb{R}
%EndExpansion
^{3}$ is obtained as%
\begin{eqnarray*}
\text{div}f &=&\frac{\partial f_{1}}{\partial u_{1}}+\frac{\partial f_{2}}{%
\partial u_{2}}+\frac{\partial f_{3}}{\partial u_{3}} \\
&=&\alpha -a-0.4.
\end{eqnarray*}%
Let $\Omega $ be an arbitrary region in $%
%TCIMACRO{\U{211d} }%
%BeginExpansion
\mathbb{R}
%EndExpansion
^{3}$ with a smooth boundary and let $\Omega \left( t\right) =\Phi
_{t}\left( \Omega \right) $, with $\Phi _{t}$ denoting the flow of field $f$%
. Also, let $V\left( t\right) $ be the volume of $\Omega \left( t\right) $.
It follows from Liouville's theorem that

\begin{eqnarray*}
\frac{dV\left( t\right) }{dt} &=&\int_{\Omega \left( t\right) }\left( \text{%
div}f\right) du_{1}du_{2}du_{3} \\
&=&\left( \alpha -a-0.4\right) V\left( t\right) .
\end{eqnarray*}%
The volume $V\left( t\right) $\ can, then, be obtained through simple
integration of the linear ODE yielding

\begin{equation}
V\left( t\right) =V\left( 0\right) e^{\left( \alpha -a-0.4\right) t}.
\label{2.4}
\end{equation}%
subject to
\begin{equation}
\alpha -a-0.4<0,  \label{2.5}
\end{equation}%
the volume in (\ref{2.4}) decays to zero as $t\rightarrow \infty $\ at an
exponential rate. It follows by definition that the system has a dissipative
nature. This means that the asymptotic motion of the system settles in all
cases onto a set that has a measure of zero. In other words, the system has
a strange attractor.

Let us, now, find the equilibrium points, which are the solutions of%
\begin{equation}
\left \{
\begin{array}{l}
-au_{1}+u_{2}+10u_{2}u_{3}=0, \\
-u_{1}-0.4u_{2}+5u_{1}u_{3}=0, \\
\alpha u_{3}-5u_{1}u_{2}=0.%
\end{array}%
\right.  \label{2.6}
\end{equation}%
We let $\alpha =0.175$ and $a=0.4$. We find that system (\ref{2.1}) has five
singular points%
\begin{eqnarray*}
O_{1} &=&\left(
\begin{array}{c}
0 \\
0 \\
0%
\end{array}%
\right) ,\ O_{2}=\left(
\begin{array}{c}
-0.03.154\,9 \\
0.122\,38 \\
-0.110\,31%
\end{array}%
\right) ,\ O_{3}=\left(
\begin{array}{c}
0.03154\,9 \\
-0.122\,38 \\
-0.110\,31%
\end{array}%
\right) , \\
O_{4} &=&\left(
\begin{array}{c}
0.238\,97 \\
0.030803 \\
0.210\,31%
\end{array}%
\right) ,\ O_{5}=\left(
\begin{array}{c}
-0.238\,97 \\
-0.030803 \\
0.210\,31%
\end{array}%
\right) .
\end{eqnarray*}

The matrix Jacobian matrix for the right--hand side of (\ref{2.1}) is easily
given by%
\begin{equation}
J=\left(
\begin{array}{ccc}
-0.4 & 1+10u_{3} & 10u_{2} \\
-1+5u_{3} & -0.4 & 5u_{1} \\
-5u_{2} & -5u_{1} & \alpha =0.175%
\end{array}%
\right) .  \label{2.7}
\end{equation}%
Substituting each of the five points in the Jacobian matrix and calculating
the corresponding eigenvalues leads to%
\begin{equation*}
\begin{array}{llll}
O_{1} & \rightarrow & \lambda _{1}=0.175 & \lambda _{2,3}=-0.4\pm i, \\
O_{2},O_{3} & \rightarrow & \lambda _{1}=-0.79997 & \lambda
_{2,3}=0.087484\pm 0.87526i, \\
O_{4},O_{5} & \rightarrow & \lambda _{1}=-0.79997 & \lambda
_{2,3}=0.087487\pm 1.2114i,%
\end{array}%
\end{equation*}%
Obviously, none of the Jacobians have eigenvalues with all negative real
parts. Hence, non of these points are asymptotically stable. This agrees
with the fact that the system has the Lyapunov exponents $k_{1}=0.1302$, $%
k_{2}=0$, and $k_{3}=-0.7537$ as reported in \cite{Wolf1985} and other
studies. Also, it is apparent from the phase plots in Figure \ref{Fig2} that
the two attractors are in fact points $O_{2}$ and $O_{3}$.

One of the major concerns when dealing with chaotic systems is their
control. The vast majority of their application relies on the concept of
synchronization, which aims to force the states $v_{i}$\ of a slave system
to follow the trajectories set out by the states $u_{i}$ of a master. In its
simplest form refered to as complete synchronization, the aim is to
introduce control paramaters into $v_{i}$\ to ensure $\lim_{t\rightarrow
\infty }\left \Vert u_{i}-v_{i}\right \Vert =0$ for all $i$. Some studies
have determined linear and nonlinear control laws to synchronize a pair of
Newton--Leipnik systems with different initial conditions including \cite%
{Qiang2008,Jovic2011}. In this paper, we aim to develop a control strategy
for the complete synchronization of the reaction--diffusion system
corresponding to the original Newton--Leipnik model as will be discussed in
the next section.

\section{Complete Synchronization Under Diffusion\label{SecSync}}

Let us, now, consider as master the reaction--diffusion system%
\begin{equation}
\left\{
\begin{array}{l}
\frac{du_{1}}{dt}-d_{1}\Delta u_{1}=-0.4u_{1}+u_{2}+10u_{2}u_{3},\text{ \ \
\ in }\mathbb{R}^{+}\times \Omega , \\
\frac{du_{2}}{dt}-d_{2}\Delta u_{2}=-u_{1}-0.4u_{2}+5u_{1}u_{3},\text{ \ \ \
\ \ in }\mathbb{R}^{+}\times \Omega , \\
\frac{du_{3}}{dt}-d_{3}\Delta u_{3}=\alpha u_{3}-5u_{1}u_{2},\text{ \ \ \ \
\ \ \ \ \ \ \ \ \ \ \ \ \ \ \ in }\mathbb{R}^{+}\times \Omega ,%
\end{array}%
\right.   \label{3.1}
\end{equation}%
where $\Omega $ is a bounded domain in $%
%TCIMACRO{\U{211d} }%
%BeginExpansion
\mathbb{R}
%EndExpansion
^{n}$ with smooth boundary $\partial \Omega $ and $\Delta $ is the Laplacian
operator on $\Omega $. We assume non--negative continuous and bounded
initial data%
\begin{equation}
u_{i}\left( 0,x\right) =u_{i0}\left( x\right) ,\text{ \ }i=1,2,3\text{ \ \ \
\ \ \ \ \ \ \ in }\Omega ,  \label{3.1.1}
\end{equation}%
where $u_{i0}\left( x\right) \in C^{2}\left( \Omega \right) \cap C\left(
\overline{\Omega }\right) $, and homogoneous Neumann boundary conditions
\begin{equation}
\dfrac{\partial u_{i}}{\partial \nu }=0\ \ ,\text{ \ }i=1,2,3\text{\ \ \ \ \
on \ \ \ }\mathbb{R}^{+}\times \partial \Omega ,  \label{3.1.2}
\end{equation}%
with $\nu $ being the unit outer normal to $\partial \Omega $. The constants
$d_{1},d_{2},$ and $d_{3}$ are assumed to be strictly positive control
parameters. System (\ref{3.1}) is similar to (\ref{2.1}) but takes into
consideration the distribution of the functions $u_{i}\left( x,t\right) $ in
multi--dimensional space. The slave is defined in much the same way as%
\begin{equation}
\left\{
\begin{array}{l}
\frac{dv_{1}}{dt}-d_{1}\Delta v_{1}=-0.4v_{1}+v_{2}+10v_{2}v_{3}+\phi _{1},%
\text{\ in }\mathbb{R}^{+}\times \Omega , \\
\frac{dv_{2}}{dt}-d_{2}\Delta v_{2}=-v_{1}-0.4v_{2}+5v_{1}v_{3}+\phi _{2},%
\text{ \ \ in }\mathbb{R}^{+}\times \Omega , \\
\frac{dv_{3}}{dt}-d_{3}\Delta v_{3}=\alpha v_{3}-5v_{1}v_{2}+\phi _{3},\text{
\ \ \ \ \ \ \ \ \ \ \ \ \ \ \ \ in }\mathbb{R}^{+}\times \Omega ,%
\end{array}%
\right.   \label{3.2}
\end{equation}%
where $\phi _{i}$ are additive controllers to be defined later. The aim of
our control scheme is to find a closed form for $\phi _{i}$ as functions of $%
u=(u_{1},u_{2},u_{3})^{T},v=(v_{1},v_{2},v_{3})^{T}\in
%TCIMACRO{\U{211d} }%
%BeginExpansion
\mathbb{R}
%EndExpansion
^{3}$ such that%
\begin{equation}
\lim_{t\rightarrow \infty }\left\Vert u-v\right\Vert _{\infty }=0
\label{3.3}
\end{equation}%
for any $t>0$. The synchronization error for the $i^{\text{th}}$ component
is defined as%
\begin{equation}
e_{i}\left( x,t\right) =v_{i}\left( x,t\right) -u_{i}\left( x,t\right) ,
\label{3.4}
\end{equation}%
leading to the following reaction--diffusion representation%
\begin{equation}
\left\{
\begin{array}{l}
\frac{de_{1}}{dt}-d_{1}\Delta
e_{1}=-0.4e_{1}+e_{2}+10v_{2}v_{3}-10u_{2}u_{3}+\phi _{1}, \\
\frac{de_{2}}{dt}-d_{2}\Delta
e_{2}=-e_{1}-0.4e_{2}+5v_{1}v_{3}-5u_{1}u_{3}+\phi _{2}, \\
\frac{de_{3}}{dt}-d_{3}\Delta e_{3}=\alpha
e_{3}-5v_{1}v_{2}+5u_{1}u_{2}+\phi _{3}.%
\end{array}%
\right.   \label{3.6}
\end{equation}%
We assume non-negative continuous and bounded initial data%
\begin{equation}
e_{i}\left( 0,x\right) =e_{i0}\left( x\right) ,\text{ \ }i=1,2,3\text{ \ \ \
\ \ \ \ \ \ \ in }\Omega ,
\end{equation}%
where $e_{i0}\left( x\right) \in C^{2}\left( \Omega \right) \cap C\left(
\overline{\Omega }\right) $, and homogoneous Neumann boundary conditions
\begin{equation}
\dfrac{\partial e_{i}}{\partial \nu }=0\ \ ,\text{ \ }i=1,2,3\text{\ \ \ \ \
on \ \ \ }\mathbb{R}^{+}\times \partial \Omega ,
\end{equation}%
The following theorem presents the main finding of this study.

\begin{theorem}
\label{Theo1}The master--slave pair (\ref{3.1})--(\ref{3.2}) is globally
synchronized subject to%
\begin{equation}
\left \{
\begin{array}{l}
\phi _{1}=-10v_{2}e_{3}+5u_{2}e_{3}, \\
\phi _{2}=-15u_{3}e_{1}, \\
\phi _{3}=-\left( \alpha +k\right) e_{3}.%
\end{array}%
\right.  \label{3.7}
\end{equation}
\end{theorem}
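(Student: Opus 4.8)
The plan is to substitute the controllers (\ref{3.7}) into the error system (\ref{3.6}), first establish local asymptotic stability of the zero--error state through the Jacobian, and then upgrade this to global synchronization by means of a quadratic Lyapunov functional. I would begin by writing $v_i = u_i + e_i$ and expanding every bilinear product as $v_jv_k - u_ju_k = u_je_k + e_ju_k + e_je_k$. Feeding the three expressions in (\ref{3.7}) into (\ref{3.6}), the controllers are engineered so that the surviving reaction terms assemble into the skew--coupled system
\begin{equation*}
\left\{
\begin{array}{l}
\frac{\partial e_1}{\partial t} = d_1\Delta e_1 -0.4e_1 + (1+10u_3)e_2 + 5u_2e_3, \\
\frac{\partial e_2}{\partial t} = d_2\Delta e_2 -(1+10u_3)e_1 -0.4e_2 + 5u_1e_3 + 5e_1e_3, \\
\frac{\partial e_3}{\partial t} = d_3\Delta e_3 -5u_2e_1 -5u_1e_2 -ke_3 -5e_1e_2.
\end{array}
\right.
\end{equation*}
The decisive structural fact is that the coefficient matrix of the part linear in $e$ splits as $J = D + S$, where $D=\mathrm{diag}(-0.4,-0.4,-k)$ and $S$ is skew--symmetric (the pairs $\pm(1+10u_3)$, $\pm5u_2$, $\pm5u_1$ occupy mirror positions), while the only cubic contributions, $+5e_1e_3$ in the second equation and $-5e_1e_2$ in the third, are matched so as to cancel against each other.

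For the local claim I would evaluate the Jacobian of the reaction part at $e=0$, which is exactly the frozen--coefficient matrix $J$ above. Since $S$ is real and skew--symmetric, for any eigenpair $(\lambda,w)$ one has $\mathrm{Re}(\lambda)\lVert w\rVert^2 = \mathrm{Re}(w^\ast J w) = w^\ast D w \le -\min(0.4,k)\lVert w\rVert^2$, so every eigenvalue has strictly negative real part whenever $k>0$. This delivers the local asymptotic stability asserted in the abstract, with the understanding that it is a frozen--coefficient indicator, the $u_i$ being space-- and time--dependent; the Lyapunov argument below is what makes the conclusion rigorous and uniform in the master states.

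For global synchronization I would take the Lyapunov functional
\begin{equation*}
L(t) = \frac{1}{2}\int_\Omega \left(e_1^2 + e_2^2 + e_3^2\right)\,dx,
\end{equation*}
differentiate along trajectories, and integrate the diffusion terms by parts. The homogeneous Neumann conditions (\ref{3.1.2}) annihilate the boundary integrals and leave $-\sum_i d_i\int_\Omega \lvert\nabla e_i\rvert^2\,dx \le 0$. The same skew--symmetry that neutralized the off--diagonal coupling now forces every mixed product $e_1e_2$, $e_1e_3$, $e_2e_3$ as well as the cubic term $e_1e_2e_3$ to cancel in pairs, so that
\begin{equation*}
\frac{dL}{dt} \le -\int_\Omega \left(0.4e_1^2 + 0.4e_2^2 + ke_3^2\right)\,dx \le -2\min(0.4,k)\,L(t).
\end{equation*}
A Gronwall/comparison argument then yields $L(t)\le L(0)e^{-2\min(0.4,k)t}$, whence $e_i\to 0$ in $L^2(\Omega)$ for every $k>0$.

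The main obstacle is the bookkeeping behind these cancellations: one must verify that the indefinite terms coupling the errors to the sign--varying but bounded master states $u_i$, together with the genuinely cubic term $e_1e_2e_3$, annihilate each other \emph{exactly} — this is precisely the requirement that dictates the form (\ref{3.7}) and is the crux of the construction. A secondary subtlety worth flagging is that the synchronization objective (\ref{3.3}) is posed in the $L^\infty$ norm, whereas the functional $L$ controls only the $L^2$ norm; closing this gap in full rigor would require boundedness of the master trajectory together with parabolic smoothing or a Sobolev embedding estimate, and I would note carefully where the paper bridges (or glosses over) this distinction.
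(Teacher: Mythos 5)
Your proposal is correct, and its core --- the quadratic Lyapunov functional, integration by parts under the Neumann conditions, and the exact pairwise cancellation of all mixed terms forced by the skew--symmetric coupling structure --- is precisely the paper's proof of Theorem \ref{Theo2}. (Your bookkeeping, which keeps $5u_1e_3+5e_1e_3$ and $-5u_1e_2-5e_1e_2$ explicit, is equivalent to the paper's absorbing these into $5v_1e_3$ and $-5v_1e_2$ via $v_1=u_1+e_1$ in the matrix $A$ of (\ref{3.9}); the cancellations are the same.) Where you genuinely depart from the paper is in the local analysis and in how decay is concluded. The paper (Propositions \ref{Prop1} and \ref{Prop2}) checks negativity of the trace, the determinant, and the determinant of the second additive compound, both for the Jacobian and for the family $A_i$ obtained by expanding in the Neumann eigenfunctions of $-\Delta$; that route only delivers stability for ``sufficiently large $k$'' and under the extra diffusion condition (\ref{3.13}). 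Your observation that the frozen coefficient matrix splits as $D+S$ with $D=\mathrm{diag}(-0.4,-0.4,-k)$ and $S$ real skew--symmetric, so that for any unit eigenvector $w$ one has $\mathrm{Re}\,\lambda = w^{\ast}Dw \le -\min(0.4,k)<0$, is cleaner and strictly stronger: it yields local stability for \emph{every} $k>0$ and exposes condition (\ref{3.13}) as superfluous --- indeed the paper's own proof of Theorem \ref{Theo2} never actually uses (\ref{3.13}) despite stating it as a hypothesis. Similarly, the paper ends by asserting that $dV/dt$ is negative semi--definite and appealing (somewhat loosely) to Barbalat's lemma, whereas your differential inequality $dL/dt\le -2\min(0.4,k)\,L$ plus Gronwall gives exponential $L^{2}$ decay directly, which is sharper and self--contained. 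Finally, your flag about the mismatch between the $L^{2}$ convergence the functional controls and the $L^{\infty}$ objective (\ref{3.3}) is well placed: the paper does not bridge this gap, and closing it rigorously would require the boundedness of the master trajectories together with a parabolic smoothing or Sobolev embedding argument, exactly as you note.
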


The proof of this theorem is extensive and involves establishing the local
and global asymptotic stability of the zero equilibrium of error system (\ref%
{3.6}). We will consider the two types of stability separately. Proposition %
\ref{Prop1} will show that the equilibrium is locally asymptotically stable
in the ODE sense. Proposition \ref{Prop2} will establish sufficient
conditions for the local asymptotic stability of the zero steady state.
Finally, Theorem \ref{Theo2} will show that subject to the same condition,
the zero steady state is globally asymptotically stable.

Before we can present these findings, let us substitute the control
parameters (\ref{3.7}) in (\ref{3.6}) and rewrite the resulting system in
matrix form yielding%
\begin{equation}
\frac{dE}{dt}-D\Delta E=AE,  \label{3.8}
\end{equation}%
where%
\begin{equation}
E=\left(
\begin{array}{c}
e_{1} \\
e_{2} \\
e_{3}%
\end{array}%
\right) ,\ D=\left(
\begin{array}{ccc}
d_{1} & 0 & 0 \\
0 & d_{2} & 0 \\
0 & 0 & d_{3}%
\end{array}%
\right) ,\ A=\left(
\begin{array}{ccc}
-0.4 & 1+10u_{3} & 5u_{2} \\
-1-10u_{3} & -0.4 & 5v_{1} \\
-5u_{2} & -5v_{1} & -k%
\end{array}%
\right) .  \label{3.9}
\end{equation}%
The following subsections will present the stability results.

\subsection{Local Stability}

It is well known from linear stability theory (see \cite{Casten1977}) that
an equilibrium point is locally asymptotically stable subject to all the
eigenvalues of the Jacobian matrix evaluated at that point having negative
real parts. First, in the absense of diffusion, (\ref{3.8}) becomes%
\begin{equation}
\frac{dE}{dt}=AE,  \label{3.10}
\end{equation}%
which has the point $\left( 0,0,0\right) ^{T}$ as its equilibrium. The
following proposition establishes sufficient conditions for the local
asymptotic stability of the zero equilibium of (\ref{3.10}).

\begin{proposition}
\label{Prop1}The solution $\left( 0,0,0\right) $ is a locally asymptotically
stable equilibrium for (\ref{3.10}) for some sufficiently large $k$.
\end{proposition}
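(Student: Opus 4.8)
The plan is to apply the linear stability criterion recalled just before the statement. First I would form the Jacobian of the right--hand side of (\ref{3.10}) at the origin. At $E=0$ the slave and master states coincide, so $v_{1}=u_{1}$ there, and the Jacobian is precisely the matrix $A$ of (\ref{3.9}) with $v_{1}$ replaced by $u_{1}$. Its entries still depend on the master states $u_{1},u_{2},u_{3}$, which, by the dissipativity established through (\ref{2.4})--(\ref{2.5}), remain uniformly bounded on the attractor. The question therefore reduces to placing the roots of the characteristic polynomial of this matrix in the open left half--plane, uniformly in $t$.

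Next I would expand $\det \left( \lambda I-A\right) =\lambda ^{3}+a_{1}\lambda ^{2}+a_{2}\lambda +a_{3}$. A direct computation, in which the off--diagonal cross terms of the determinant cancel, yields
\begin{equation*}
a_{1}=0.8+k,\qquad a_{2}=0.16+\left( 1+10u_{3}\right) ^{2}+25\left( u_{1}^{2}+u_{2}^{2}\right) +0.8k,
\end{equation*}
together with $a_{3}=k\left[ 0.16+\left( 1+10u_{3}\right) ^{2}\right] +10\left( u_{1}^{2}+u_{2}^{2}\right) $. The essential structural feature to record is that $a_{1}a_{2}$ is quadratic in $k$ whereas $a_{3}$ grows only linearly in $k$.

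I would then invoke the Routh--Hurwitz criterion for a cubic: all three roots have negative real part if and only if $a_{1}>0$, $a_{3}>0$, and $a_{1}a_{2}>a_{3}$. Since $k>0$ and every state--dependent term is nonnegative, $a_{1}>0$ and $a_{3}>0$ hold at once. The decisive inequality is $a_{1}a_{2}-a_{3}>0$; its leading term in $k$ is $0.8k^{2}>0$, so for $k$ large enough this quantity is positive. Because the boundedness of the master trajectory bounds every coefficient of the lower powers of $k$ independently of $t$, there exists a threshold $k^{\ast }$, uniform in $t$, beyond which the inequality holds for all admissible states; any $k>k^{\ast }$ then establishes the claim.

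The main obstacle is conceptual rather than computational: $A$ is not constant but varies in time through $u_{1},u_{2},u_{3}$, so the frozen--coefficient eigenvalue argument is, strictly speaking, only a linearized local indicator for a time--varying system. What rescues it is exactly the \emph{uniform} boundedness of the master states, which forces the Routh--Hurwitz quantities to admit $t$--independent bounds and hence lets a single $k^{\ast }$ serve for all time. Upgrading this local, eigenvalue--based picture to genuine (global) asymptotic stability of the full reaction--diffusion error system is deferred to the Lyapunov argument of Theorem \ref{Theo2}; Proposition \ref{Prop1} asserts only the linearized, local conclusion in the ODE sense.
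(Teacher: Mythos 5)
Your proposal is correct, and at bottom it checks the very same three sign conditions as the paper, only packaged through the characteristic polynomial rather than through compound matrices: since the eigenvalues of the second additive compound $J^{\left[ 2\right] }$ are the pairwise sums $\lambda _{i}+\lambda _{j}$, one has the identities $a_{1}=-\text{tr}J$, $a_{3}=-\det J$, and $a_{1}a_{2}-a_{3}=-\det J^{\left[ 2\right] }$, so your Routh--Hurwitz inequalities are term-for-term the paper's requirements that $\text{tr}J$, $\det J$, and $\det J^{\left[ 2\right] }$ all be negative. The repackaging nevertheless buys something concrete. First, your form $a_{3}=k\left[ 0.16+\left( 1+10u_{3}\right) ^{2}\right] +10\left( u_{1}^{2}+u_{2}^{2}\right) $ makes positivity self-evident, where the paper needs a discriminant computation for $2500u_{3}^{2}+500u_{3}+29$. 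Second, and more substantively, expanding your decisive quantity gives
\begin{equation*}
a_{1}a_{2}-a_{3}=0.8k^{2}+0.64k+25k\left( u_{1}^{2}+u_{2}^{2}\right)
+10\left( u_{1}^{2}+u_{2}^{2}\right) +0.8\left( 1+10u_{3}\right)
^{2}+0.128,
\end{equation*}
a sum of nonnegative terms plus the constant $0.128$, so all three Hurwitz conditions actually hold for \emph{every} $k>0$, irrespective of the size of the master states, and no threshold $k^{\ast }$ is needed at all. The paper's weaker ``sufficiently large $k$'' conclusion traces to its expression for $\det J^{\left[ 2\right] }$, which contains sign-indefinite cross terms $50u_{1}u_{2}\left( 1+10u_{3}\right) $ and the positive block $80u_{3}^{2}+16u_{3}+0.672$; such terms cannot occur in the determinant of the genuine second additive compound, which must equal $-\left( a_{1}a_{2}-a_{3}\right) $, and they appear to come from sign slips in the off-diagonal entries of the paper's $J^{\left[ 2\right] }$. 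One caution on your own write-up: the uniform boundedness of $u_{1},u_{2},u_{3}$ does not follow from the dissipativity relation (\ref{2.4})--(\ref{2.5}) alone, since volume contraction does not preclude unbounded trajectories; it should be invoked as a known property of the Newton--Leipnik attractor. With the full expansion above this point becomes moot for the algebra (no largeness of $k$ is required), though your closing observation stands: for a time-varying coefficient matrix, frozen-time eigenvalue placement is only a local, linearized indicator, and the genuine global claim is what Theorem \ref{Theo2} supplies.
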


\begin{proof}
Evaluating the Jacobian matrix (\ref{2.7}) at the zero solution yields
\begin{equation*}
J\left( 0,0,0\right) =\left(
\begin{array}{ccc}
-0.4 & 1+10u_{3} & 5u_{2} \\
-1-10u_{3} & -0.4 & 5u_{1} \\
-5u_{2} & -5u_{1} & -k%
\end{array}%
\right) .
\end{equation*}%
The system (\ref{3.10}) is locally asymptotically stable in the neighborhood
of equilibrium point $\left( 0,0,0\right) $ if the real parts of the
eigenvalues of $J\left( 0,0,0\right) $ are all negative. In order to ensure
that, we need to show that the determinant and trace of $J\left(
0,0,0\right) $ as well as the determinant of the second compound%
\begin{equation*}
J^{\left[ 2\right] }=\left(
\begin{array}{ccc}
-0.8 & 5u_{1} & 5u_{2} \\
-5u_{1} & -0.4-k & -1-10u_{3} \\
-5u_{2} & -1-10u_{3} & -0.4-k%
\end{array}%
\right) ,
\end{equation*}%
are all negative see \cite{Abdelmalek2016,Abdelmalek2018}. We start with
\begin{eqnarray*}
\det J\left( 0,0,0\right) &=&-10u_{1}^{2}-10u_{2}^{2}-20k\left(
5u_{3}^{2}+u_{3}\right) -1.16k, \\
&=&-10u_{1}^{2}-10u_{2}^{2}-100ku_{3}^{2}-20ku_{3}-1.16k, \\
&=&-10u_{1}^{2}-10u_{2}^{2}-\frac{1}{25}k\left(
2500u_{3}^{2}+500u_{3}+29\right) .
\end{eqnarray*}%
Since the discriminant of the polynomial $\left(
2500u_{3}^{2}+500u_{3}+29\right) $ is%
\begin{eqnarray*}
\Delta &=&\left( 500\right) ^{2}-4\left( 2500\right) \left( 29\right) \\
&=&-40\,000<0,
\end{eqnarray*}%
it is easy to see that $\det J\left( 0,0,0\right) $.\ Next, we look at the
trace, which is given by%
\begin{equation*}
\text{tr}J\left( 0,0,0\right) =-0.8-k,
\end{equation*}%
and is clearly negative. Lastly, and the determinant of $J^{\left[ 2\right]
} $ is given by%
\begin{eqnarray*}
\det J^{\left[ 2\right] }
&=&-0.8k^{2}-25ku_{1}^{2}-25ku_{2}^{2}-0.64k-10u_{1}^{2}-10u_{2}^{2} \\
&&\ \ \ \ -500u_{1}u_{2}u_{3}-50u_{1}u_{2}+80u_{3}^{2}+16u_{3}+0.672.
\end{eqnarray*}%
We observe that the coefficients of $k$ are all negative. Hence, there exist
sufficiently large values for $k$ such that $\det J^{\left[ 2\right] }<0$,
and thus the equilibrium $\left( 0,0,0\right) $\ becomes locally
asymptotically stable.
\end{proof}

Let us, now, include diffusion and assess the local stability of the zero
solution. In the presence of diffusion, the steady state solution satisfies
the following system%
\begin{equation}
\left \{
\begin{array}{l}
-d_{1}\Delta e_{1}=-0.4e_{1}+\left[ 1+10u_{3}\right] e_{2}+5u_{2}e_{3}, \\
-d_{2}\Delta e_{2}=-\left[ 1+10u_{3}\right] e_{1}-0.4e_{2}+5v_{1}e_{3}, \\
-d_{3}\Delta e_{2}=-5u_{2}e_{1}-5v_{1}e_{2}-ke_{3},%
\end{array}%
\right.  \label{3.11}
\end{equation}%
subject to the homogeneous Neumann boundary conditions
\begin{equation*}
\dfrac{\partial e_{1}}{\partial \nu }=\dfrac{\partial e_{2}}{\partial \nu }=%
\dfrac{\partial e_{3}}{\partial \nu }=0\text{ for all\ }x\in \partial \Omega
.
\end{equation*}

We denote the eigenvalues of the elliptic operator ($-\Delta $) subject to
the homogeneous Neumann boundary conditions on $\Omega $ by%
\begin{equation*}
0=\lambda _{0}<\lambda _{1}\leq \lambda _{2}\leq ...
\end{equation*}
We assume that each eigencalue $\lambda _{i}$ has multiplicity $m_{i}\geq 1$%
. We also denote the normalized eigenfunctions corresponding to $\lambda
_{i} $ by $\Phi _{ij},1\leq j\leq m_{i}$. It should be noted that $\Phi _{0}$
is a constant and $\lambda _{i}\rightarrow \infty $ as $i\rightarrow \infty $%
. The eigenfunctions and eigenvalues posess a number of interesting
properties including%
\begin{equation}
\begin{array}{lll}
-\Delta \Phi _{ij}=\lambda _{i}\Phi _{ij} & \text{in} & \Omega , \\
\frac{\partial \Phi _{ij}}{\partial \nu }=0 & \text{on} & \partial \Omega ,
\\
\int_{\Omega }\Phi _{ij}^{2}\left( x\right) dx=1. &  &
\end{array}
\label{3.12}
\end{equation}%
The following proposition establishes sufficient conditions for the local
asymptotic stability of the zero steady state soltuion.

\begin{proposition}
\label{Prop2}The constant steady state $\left( 0,0,0\right) $ is locally
asymptotically stable for (\ref{3.8}) if%
\begin{equation}
\frac{0.52+100u_{3}^{2}+20u_{3}}{1.6+2k}<d_{3}\lambda _{1}.  \label{3.13}
\end{equation}
\end{proposition}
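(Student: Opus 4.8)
The plan is to analyze the linearized system (\ref{3.8}) by projecting the dynamics onto the eigenspaces of the Laplacian operator $-\Delta$ under the homogeneous Neumann boundary conditions. Since the eigenfunctions $\Phi_{ij}$ given in (\ref{3.12}) form a complete orthonormal basis of $L^2(\Omega)$, I would expand the error field $E(x,t)$ in this basis as $E(x,t)=\sum_{i\geq 0}\sum_{j=1}^{m_i} c_{ij}(t)\,\Phi_{ij}(x)$ with vector-valued coefficients $c_{ij}(t)\in\mathbb{R}^3$. Substituting into $\frac{dE}{dt}-D\Delta E = AE$ and using $-\Delta\Phi_{ij}=\lambda_i\Phi_{ij}$ reduces the PDE to an infinite family of decoupled finite-dimensional linear ODEs, one for each eigenvalue $\lambda_i$, of the form
\begin{equation*}
\dot{c}_{ij} = \left(A - \lambda_i D\right) c_{ij}.
\end{equation*}
The zero steady state of (\ref{3.8}) is then locally asymptotically stable precisely when, for every $i\geq 0$, all eigenvalues of the reduced matrix $A-\lambda_i D$ have negative real parts.

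The key step is to check this spectral condition mode by mode. For $i=0$ we have $\lambda_0=0$, so $A-\lambda_0 D = A = J(0,0,0)$, and stability of this mode is exactly what was already established in Proposition \ref{Prop1} for sufficiently large $k$. For $i\geq 1$ the added term $-\lambda_i D$ (with $\lambda_i>0$ and $D$ diagonal positive) only shifts the diagonal entries in the stabilizing direction, so I expect these modes to be even easier to control; the worst case among the diffusive modes is $i=1$, since $\lambda_1$ is the smallest positive eigenvalue. I would therefore verify the Routh--Hurwitz conditions (negative trace, negative determinant, and negative determinant of the second compound matrix $(A-\lambda_i D)^{[2]}$) for the critical mode. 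The specific inequality (\ref{3.13}) is what emerges from forcing the relevant compound-matrix or determinant condition to hold for the $\lambda_1$ mode: the left-hand side $\tfrac{0.52+100u_3^2+20u_3}{1.6+2k}$ collects the destabilizing off-diagonal and curvature contributions, while $d_3\lambda_1$ supplies the diffusive damping that must dominate them.

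The main obstacle will be the bookkeeping in the second-compound determinant computation for $A-\lambda_1 D$: unlike the pure-ODE case, the diffusion now enters the diagonal through $-d_1\lambda_1$, $-d_2\lambda_1$, $-d_3\lambda_1$, so the Routh--Hurwitz polynomial carries these extra parameters and the algebra is heavier than in Proposition \ref{Prop1}. The clean way to isolate the governing constraint is to keep $k$ large enough that the trace and the full determinant are automatically negative (as in Proposition \ref{Prop1}), so that the binding condition collapses to a single inequality coming from the second-compound term; rearranging that inequality, and using $\lambda_i\geq\lambda_1$ for all $i\geq 1$, yields (\ref{3.13}) as the sufficient condition for every diffusive mode simultaneously. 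I would close by remarking that since (\ref{3.13}) controls the smallest positive eigenvalue $\lambda_1$ and the diffusive damping grows with $\lambda_i$, satisfying it guarantees stability of all modes $i\geq 1$, which together with Proposition \ref{Prop1} for the $i=0$ mode gives local asymptotic stability of the zero steady state.
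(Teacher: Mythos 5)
Your proposal is correct and takes essentially the same approach as the paper: expansion in the Neumann--Laplacian eigenfunctions, reduction to the mode matrices $A_i=A-\lambda_i D$, and the trace/determinant/second-additive-compound criterion, with (\ref{3.13}) emerging from the compound-determinant condition at the critical mode and $\lambda_i\geq\lambda_1$ covering all other diffusive modes. One caution: your heuristic that $-\lambda_i D$ ``only shifts the diagonal entries in the stabilizing direction, so these modes are even easier to control'' is not borne out in the computation---$\det A_i^{\left[2\right]}$ contains the \emph{positive} term $\left(0.52+100u_{3}^{2}+20u_{3}\right)\left(d_{1}+d_{2}\right)\lambda_{i}$, which grows with $\lambda_i$, and this is precisely why condition (\ref{3.13}) is needed rather than stability of the modes $i\geq 1$ being automatic.
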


\begin{proof}
Since (\ref{3.11}) has nonlinear reaction terms, we start by defining the
linearization operator%
\begin{equation}
L=\left(
\begin{array}{ccc}
-d_{1}\Delta -0.4 & 1+10u_{3} & 5u_{2} \\
-1-10u_{3} & -d_{2}\Delta -0.4 & 5u_{1} \\
-5u_{2} & -5u_{1} & -d_{3}\Delta -k%
\end{array}%
\right) .  \label{3.14}
\end{equation}%
Let $\left( \phi \left( x\right) ,\psi \left( x\right) ,\Upsilon \left(
x\right) \right) $ be an eigenfunction of $L$ corresponding to the
eigenvalue $\xi $, i.e. the pair satisfies%
\begin{equation*}
L\left( \phi \left( x\right) ,\psi \left( x\right) ,\Upsilon \left( x\right)
\right) ^{t}=\xi \left( \phi \left( x\right) ,\psi \left( x\right) ,\Upsilon
\left( x\right) \right) ^{t}.
\end{equation*}%
Alternatively, we can write%
\begin{equation*}
\left[ L-\xi I\right] \left( \phi \left( x\right) ,\psi \left( x\right)
\right) ^{t}=\left( 0,0,0\right) ^{t},
\end{equation*}%
leading to%
\begin{equation}
\left(
\begin{array}{ccc}
-d_{1}\Delta -0.4-\xi & 1+10u_{3} & 5u_{2} \\
-1-10u_{3} & -d_{2}\Delta -0.4-\xi & 5u_{1} \\
-5u_{2} & -5u_{1} & -d_{3}\Delta -k-\xi%
\end{array}%
\right) \left(
\begin{array}{c}
\phi \\
\psi \\
\Upsilon%
\end{array}%
\right) =\left(
\begin{array}{c}
0 \\
0 \\
0%
\end{array}%
\right) .  \label{3.15}
\end{equation}%
Using the factorizations%
\begin{equation*}
\phi =\sum_{0\leq i\leq \infty ,1\leq j\leq m_{i}}a_{ij}\Phi _{ij}\text{ },\
\psi =\sum_{0\leq i\leq \infty ,1\leq j\leq m_{i}}b_{ij}\Phi _{ij},\ \text{%
and }\Upsilon =\sum_{0\leq i\leq \infty ,1\leq j\leq m_{i}}c_{ij}\Phi _{ij},
\end{equation*}%
matrix equation (\ref{3.15}) can be formulated as%
\begin{equation*}
\sum_{0\leq i\leq \infty ,1\leq j\leq m_{i}}\left(
\begin{array}{ccc}
-d_{1}\lambda _{i}-0.4-\xi & 1+10u_{3} & 5u_{2} \\
-1-10u_{3} & -d_{2}\lambda _{i}-0.4-\xi & 5u_{1} \\
-5u_{2} & -5u_{1} & -d_{3}\lambda _{i}-k-\xi%
\end{array}%
\right) \left(
\begin{array}{c}
a_{ij} \\
b_{ij} \\
c_{ij}%
\end{array}%
\right) \Phi _{ij}=\left(
\begin{array}{c}
0 \\
0 \\
0%
\end{array}%
\right) .
\end{equation*}%
Disregarding the term $-\xi $, the stability of the steady state solution
relies on the eigenvalues of%
\begin{equation}
A_{i}=\left(
\begin{array}{ccc}
-d_{1}\lambda _{i}-0.4 & 1+10u_{3} & 5u_{2} \\
-1-10u_{3} & -d_{2}\lambda _{i}-0.4 & 5u_{1} \\
-5u_{2} & -5u_{1} & -d_{3}\lambda _{i}-k%
\end{array}%
\right) ,  \label{3.16}
\end{equation}%
having negative real parts. Deriving conditions for the negativity of the
eigenvalues is not easy for a $3\times 3$ matrix. Instead, we can examine
the trace and determinant of $A_{i}$ and the determinant of its second
additive compound $A_{i}^{\left[ 2\right] }$. We have%
\begin{equation*}
\text{tr}A_{i}=-\left( d_{1}+d_{2}+d_{3}\right) \lambda _{i}+\text{tr}J,
\end{equation*}%
which is clearly negativev given that $\text{tr}J$ is negative.\newline
The determinant of $A_{i}$\ is given by%
\begin{eqnarray}
\det A_{i} &=&-\left( d_{1}d_{2}d_{3}\right) \lambda _{i}^{3}  \notag \\
&&-\left( 0.4d_{1}d_{3}+0.4d_{2}d_{3}+kd_{1}d_{2}\right) \lambda _{i}^{2}
\notag \\
&&-\left(
25d_{1}u_{1}^{2}+25d_{2}u_{2}^{2}+100d_{3}u_{3}^{2}+20d_{3}u_{3}+1.16d_{3}+0.4kd_{1}+0.4kd_{2}\right) \lambda _{i}
\notag \\
&&+\det J.  \label{3.17}
\end{eqnarray}%
Obviously, it suffices for $\det J$ to be negative to achieve $\det A_{i}<0$%
. For $i=0$, we have%
\begin{equation*}
A_{0}=J=\left(
\begin{array}{ccc}
-0.4 & 1+10u_{3} & 5u_{2} \\
-1-10u_{3} & -0.4 & 5u_{1} \\
-5u_{2} & -5u_{1} & -k%
\end{array}%
\right) ,
\end{equation*}%
with determinant%
\begin{equation*}
\det J=-10u_{1}^{2}-10u_{2}^{2}-20k\left( 5u_{3}^{2}+u_{3}\right) -1.16k<0,
\end{equation*}%
leading to $\det A_{0}<0$ and consequently $\det A_{i}<0$. Now, let us look
at the determinant of $A_{i}^{\left[ 2\right] }$. We have%
\begin{equation*}
J^{\left[ 2\right] }=\left(
\begin{array}{ccc}
-0.8 & 5u_{1} & 5u_{2} \\
-5u_{1} & -0.4-k & -1-10u_{3} \\
-5u_{2} & -1-10u_{3} & -0.4-k%
\end{array}%
\right) ,
\end{equation*}%
with determinant
\begin{eqnarray*}
\det J^{\left[ 2\right] }
&=&-0.8k^{2}-25ku_{1}^{2}-25ku_{2}^{2}-0.64k-10u_{1}^{2} \\
&&-500u_{1}u_{2}u_{3}-50u_{1}u_{2}-10u_{2}^{2}+80u_{3}^{2}+16u_{3}+0.672.
\end{eqnarray*}%
Choosing k sufficiently large, we can guarantee that $\det J^{\left[ 2\right]
}$ in the same way is in the proof of Proposition \ref{Prop1}. The second
additive compound $A_{i}^{\left[ 2\right] }$ is of the form%
\begin{equation}
A_{i}^{\left[ 2\right] }=\left(
\begin{array}{ccc}
-\left( d_{1}+d_{2}\right) \lambda _{i}-0.8 & 5u_{1} & 5u_{2} \\
-5u_{1} & -\left( d_{1}+d_{3}\right) \lambda _{i}-0.4-k & -1-10u_{3} \\
-5u_{2} & -1-10u_{3} & -\left( d_{2}+d_{3}\right) \lambda _{i}-0.4-k%
\end{array}%
\right) ,  \label{3.20}
\end{equation}%
with the determinant%
\begin{eqnarray*}
\det A_{i}^{\left[ 2\right] } &=&-\left(
d_{1}^{2}d_{2}+d_{1}^{2}d_{3}+d_{1}d_{2}^{2}+2d_{1}d_{2}d_{3}+d_{1}d_{3}^{2}+d_{2}^{2}d_{3}+d_{2}d_{3}^{2}\right) \lambda _{i}^{3}
\\
&&-\left( \left( 0.4+k\right) \left( d_{1}^{2}+d_{2}^{2}\right)
+0.8d_{3}^{2}+\left( 1.6+2k\right) \left(
d_{1}d_{2}+d_{1}d_{3}+d_{2}d_{3}\right) \right) \lambda _{i}^{2} \\
&&+\left[ 0.52+100u_{3}^{2}+20u_{3}\right] \left( d_{1}+d_{2}\right) \lambda
_{i} \\
&&-\left( 1.6k\left( d_{1}+d_{2}+d_{3}\right) +k^{2}\left(
d_{1}+d_{2}\right) +0.64d_{3}+25u_{2}^{2}\left( d_{1}+d_{3}\right) \right. \\
&&\left. +25u_{1}^{2}\left( d_{2}+d_{3}\right) \lambda _{i}\right) +\det J^{
\left[ 2\right] }.
\end{eqnarray*}%
In order for $\det A_{i}^{\left[ 2\right] }$ to be negative, it suffices that%
\begin{equation*}
\left( 0.52+100u_{3}^{2}+20u_{3}\right) \left( d_{1}+d_{2}\right) \lambda
_{i}-\left( 1.6+2k\right) \left( d_{1}d_{3}+d_{2}d_{3}\right) \lambda
_{i}^{2}<0,
\end{equation*}%
which is guaranteed by (\ref{3.13}). This establishs the local asymptotic
stability of the zero steady state.
\end{proof}

\subsection{Global Aymptotic Stability}

Now that we have established the local asymptotic stability of the zero
solution, we can go ahead and apply the direct Lyapunov method to
investigate the global asymptotic stability. We propose the candidate
Lyapunov function%
\begin{equation}
V\left( t\right) =\frac{1}{2}\int_{\Omega }\left( e_{1}^{2}\left( x,t\right)
+e_{2}^{2}\left( x,t\right) +e_{3}^{2}\left( x,t\right) \right) dx.
\label{3.22}
\end{equation}%
The following theorem presents the main finding of the this study.

\begin{theorem}
\label{Theo2}Subject to (\ref{3.13}), the zero solution of (\ref{3.8}) is
globally asymptotically stable.
\end{theorem}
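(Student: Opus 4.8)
The plan is to differentiate the candidate Lyapunov functional $V(t)$ from (\ref{3.22}) along trajectories of the error system (\ref{3.8}) and show that $\dot{V}$ is strictly negative away from the zero state. Differentiating under the integral and substituting $\partial_{t}E=D\Delta E+AE$ gives
\begin{equation*}
\dot{V}(t)=\int_{\Omega}E^{T}D\Delta E\,dx+\int_{\Omega}E^{T}AE\,dx.
\end{equation*}
For the diffusion contribution I would integrate by parts and use the homogeneous Neumann conditions (\ref{3.1.2}) to discard the boundary integral, obtaining $\int_{\Omega}E^{T}D\Delta E\,dx=-\sum_{i=1}^{3}d_{i}\int_{\Omega}\left\vert\nabla e_{i}\right\vert^{2}dx\leq0$, so diffusion can only help.

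The decisive step is the reaction term. Since $E^{T}AE$ depends only on the symmetric part of $A$, and the off--diagonal entries of $A$ in (\ref{3.9}) are pairwise skew ($A_{12}=-A_{21}$, $A_{13}=-A_{31}$, $A_{23}=-A_{32}$), every cross term cancels identically, leaving the diagonal form
\begin{equation*}
E^{T}AE=-0.4\,e_{1}^{2}-0.4\,e_{2}^{2}-k\,e_{3}^{2}.
\end{equation*}
This cancellation holds pointwise in $(x,t)$ irrespective of the state--dependent coefficients $u_{2},u_{3},v_{1}$ inside $A$; it is exactly the property that the controllers (\ref{3.7}) were designed to produce. Combining the two pieces yields
\begin{equation*}
\dot{V}(t)=-\sum_{i=1}^{3}d_{i}\int_{\Omega}\left\vert\nabla e_{i}\right\vert^{2}dx-\int_{\Omega}\left(0.4\,e_{1}^{2}+0.4\,e_{2}^{2}+k\,e_{3}^{2}\right)dx,
\end{equation*}
which is strictly negative whenever $E\not\equiv0$ because $k>0$.

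To finish I would extract an explicit decay rate: discarding the non--positive gradient terms and setting $c=\min\{0.4,k\}>0$ gives $\dot{V}(t)\leq-2cV(t)$, so Gronwall's inequality yields $V(t)\leq V(0)e^{-2ct}$ and hence $\left\Vert E\right\Vert_{L^{2}(\Omega)}\to0$ as $t\to\infty$, establishing global asymptotic (indeed exponential) stability of the zero solution. I do not anticipate a genuine obstacle here: the entire argument turns on the algebraic cancellation in $E^{T}AE$, which collapses an a priori indefinite, space-- and time--dependent quadratic form into a manifestly negative definite diagonal one, so the boundedness of the attractor coordinates is not even required for $\dot{V}<0$. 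The only mild subtlety is bookkeeping, and it is worth remarking that this estimate never invokes hypothesis (\ref{3.13}); that condition enters solely in the eigenvalue--based local analysis of Proposition \ref{Prop2}, so the Lyapunov conclusion obtained here is in fact stronger than the stated theorem.
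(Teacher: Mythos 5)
Your proposal is correct and follows the paper's proof almost exactly: the same Lyapunov functional (\ref{3.22}), the same split of $\dot{V}$ into a diffusion part (killed by integration by parts with the Neumann conditions) and a reaction part, and the same key observation that the off--diagonal entries of $A$ in (\ref{3.9}) are pairwise skew, so $E^{T}AE$ collapses to $-0.4e_{1}^{2}-0.4e_{2}^{2}-ke_{3}^{2}$. The only divergence is the finish: the paper concludes via boundedness of $E$ and an appeal to Barbalat's lemma, whereas you discard the gradient terms and apply Gronwall's inequality to get $V(t)\leq V(0)e^{-2ct}$ with $c=\min\{0.4,k\}$; your route is actually the cleaner one, since it yields an explicit exponential $L^{2}$ decay rate, while Barbalat's lemma as invoked in the paper does not by itself justify the claimed exponential convergence. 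Your closing observation that hypothesis (\ref{3.13}) is never used is also consistent with the paper, whose own proof makes no use of it and whose subsequent remark concedes that the global result holds for any $k>0$.
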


\begin{proof}
The derivative of $V\left( t\right) $ with respect to time is given by%
\begin{eqnarray*}
\frac{dV\left( t\right) }{dt} &=&\int_{\Omega }\left( \dot{e}_{1}e_{1}+\dot{e%
}_{2}e_{2}+\dot{e}_{3}e_{3}\right) dx. \\
&=&I+J,
\end{eqnarray*}%
where%
\begin{equation*}
I=d_{1}\int_{\Omega }e_{1}\Delta e_{1}dx+d_{2}\int_{\Omega }e_{2}\Delta
e_{2}dx+d_{3}\int_{\Omega }e_{3}\Delta e_{3}dx,
\end{equation*}%
and%
\begin{eqnarray*}
J &=&\int_{\Omega }\left[ -0.4e_{1}^{2}+\left[ 1+10u_{3}\right]
e_{1}e_{2}+5u_{2}e_{1}e_{3}\right] dx \\
&&+\int_{\Omega }\left[ -\left[ 1+10u_{3}\right]
e_{1}e_{2}-0.4e_{2}^{2}+5v_{1}e_{2}e_{3}\right] dx \\
&&+\int_{\Omega }\left[ -5u_{2}e_{1}e_{3}-5v_{1}e_{2}e_{3}-ke_{3}^{2}\right]
dx.
\end{eqnarray*}%
Simple manipulation of $I$ and $J$\ yields%
\begin{equation*}
I=-d_{1}\int_{\Omega }\left \vert \nabla e_{1}\right \vert
^{2}dx-d_{2}\int_{\Omega }\left \vert \nabla e_{2}\right \vert
^{2}dx-d_{3}\int_{\Omega }\left \vert \nabla e_{3}\right \vert ^{2}dx<0,
\end{equation*}%
and%
\begin{equation*}
J=-0.4\int_{\Omega }e_{1}^{2}dx-0.4\int_{\Omega }e_{2}^{2}dx-k\int_{\Omega
}e_{3}^{2}dx<0.
\end{equation*}%
Hence, the derivative $\frac{dV\left( t\right) }{dt}$ is negative
semi--definite on $%
%TCIMACRO{\U{211d} }%
%BeginExpansion
\mathbb{R}
%EndExpansion
^{3}$. Consequently, we can say that the synchronization error vector $%
E\left( x,t\right) $ is globally bounded, i.e%
\begin{equation*}
E\left( x,t\right) =\left[ e_{1}\left( x,t\right) ,e_{2}\left( x.t\right)
,e_{3}\left( x,t\right) \right] ^{T}\in L_{\infty }.
\end{equation*}%
Using Barbalat's lemma, we can conclude that $E\left( x,t\right) \rightarrow
0$ exponentially as $t\rightarrow \infty $ for all initial conditions $%
E\left( x,0\right) \in
%TCIMACRO{\U{211d} }%
%BeginExpansion
\mathbb{R}
%EndExpansion
^{3}$. This concludes the proof of Theorem \ref{Theo2}.
\end{proof}

\begin{remark}
In Proposition \ref{Prop2}, we showed that subject (\ref{3.13}) the
synchronization error (\ref{3.4}) converges towards zero for a sufficiently
large $k$ yielding local stability everywhere. Note that in Theorem \ref%
{Theo2} above,\ the global asymptotic stability is established for any $k>0$%
, which implies that there exists a point in time $t_{0}$\ such that for any
$t>t_{0}$, the zero solution is locally stable for all $k>0$.
\end{remark}

\section{Numerical Example\label{SecApp}}

Consider the same parameters used in Section \label{SecMod} for the ODE
Newton--Leipnik system, which were chosen as $a=-0.4$ and $\alpha =0.175$.
The initial conditions for the master and slave systems are given by%
\begin{equation}
\left \{
\begin{array}{l}
u_{1}\left( x,0\right) =0.349\times \left( 1+0.3\cos \left( \frac{\pi }{2}%
x\right) \right) , \\
u_{2}\left( x,0\right) =0, \\
u_{3}\left( x,0\right) =-0.3\times \left( 1+0.3\cos \left( \frac{\pi }{2}%
x\right) \right) ,%
\end{array}%
\right.  \label{4.1}
\end{equation}%
and%
\begin{equation}
\left \{
\begin{array}{l}
v_{1}\left( x,0\right) =0.7\times \left( 1+0.3\cos \left( \frac{3\pi }{5}%
x\right) \right) , \\
v_{2}\left( x,0\right) =0.15\times \left( 1+0.3\cos \left( \frac{2\pi }{5}%
x\right) \right) , \\
v_{2}\left( x,0\right) =0.7\times \left( 1+0.3\cos \left( \frac{7\pi }{10}%
x\right) \right) ,%
\end{array}%
\right.  \label{4.2}
\end{equation}%
respectively. Note that the cosine terms in (\ref{4.1}) and (\ref{4.2}) were
added with the aim of introducing spatial non--homogeneity.

A Matlab simulation was performed using the implicit finite difference
method with zero Neumann boundaries. The slave system was equipped with the
control laws specified in (\ref{3.7}). The resulting master and slave states
are depicted in Figure \ref{Fig3}. Figure \ref{Fig4} shows the error defined
in (\ref{3.4}). It is clear that the error decays to zero in sufficient
time, implying that the master--slave pair is globally synchronized. The
phase plots taken at the particular point $x=5$ in one--dimensional space
are shown in Figure \ref{Fig5}. The slave states converge towards the master
states.

\begin{figure}[tbph]
\centering \includegraphics[width=5.5in]{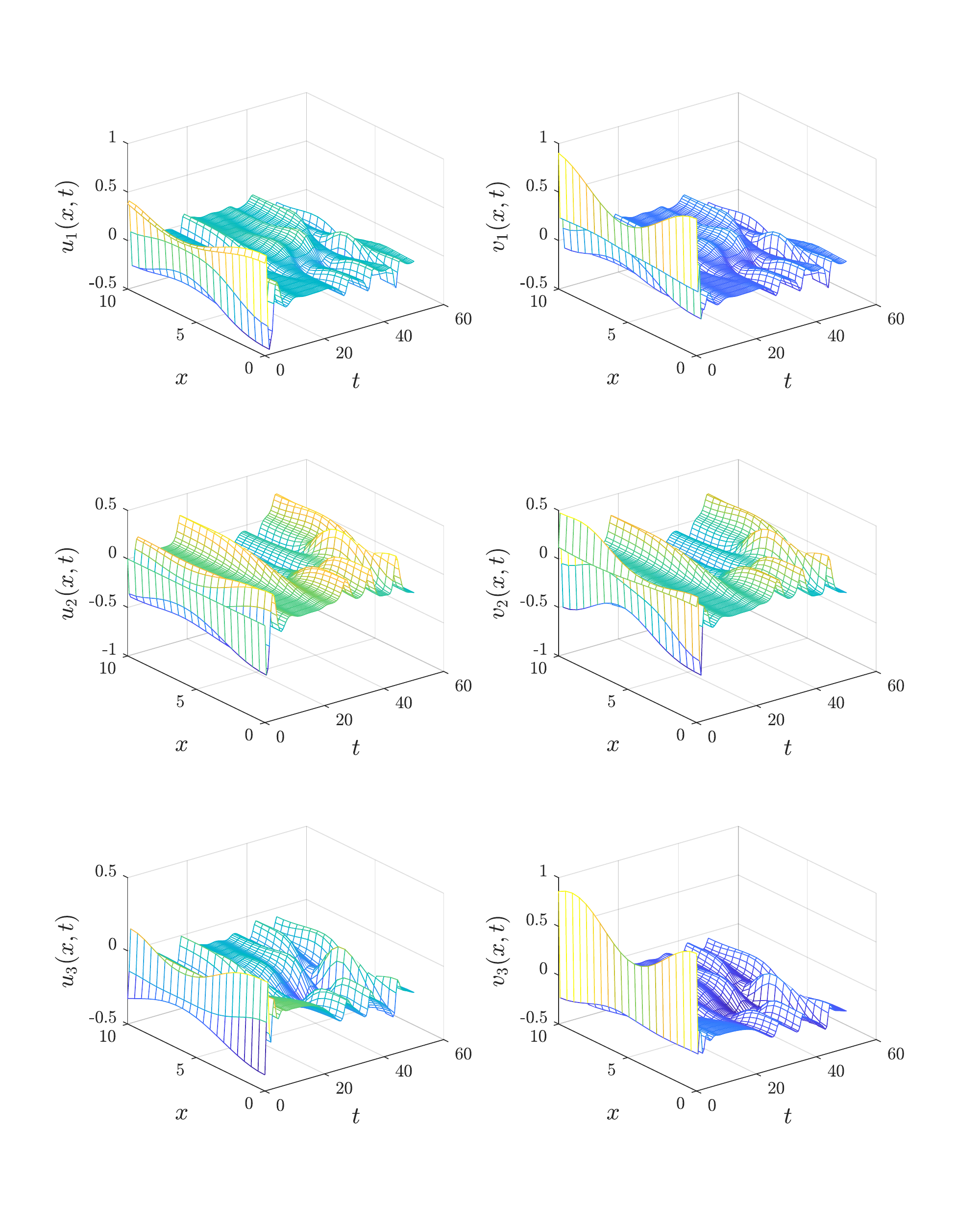}
\caption{The master states (right) and slave states (left) as a function of
time and space with the initial conditions in (\protect\ref{4.1}) and (%
\protect\ref{4.2}), respectively.}
\label{Fig3}
\end{figure}

\begin{figure}[tbph]
\centering \includegraphics[width=3.5in]{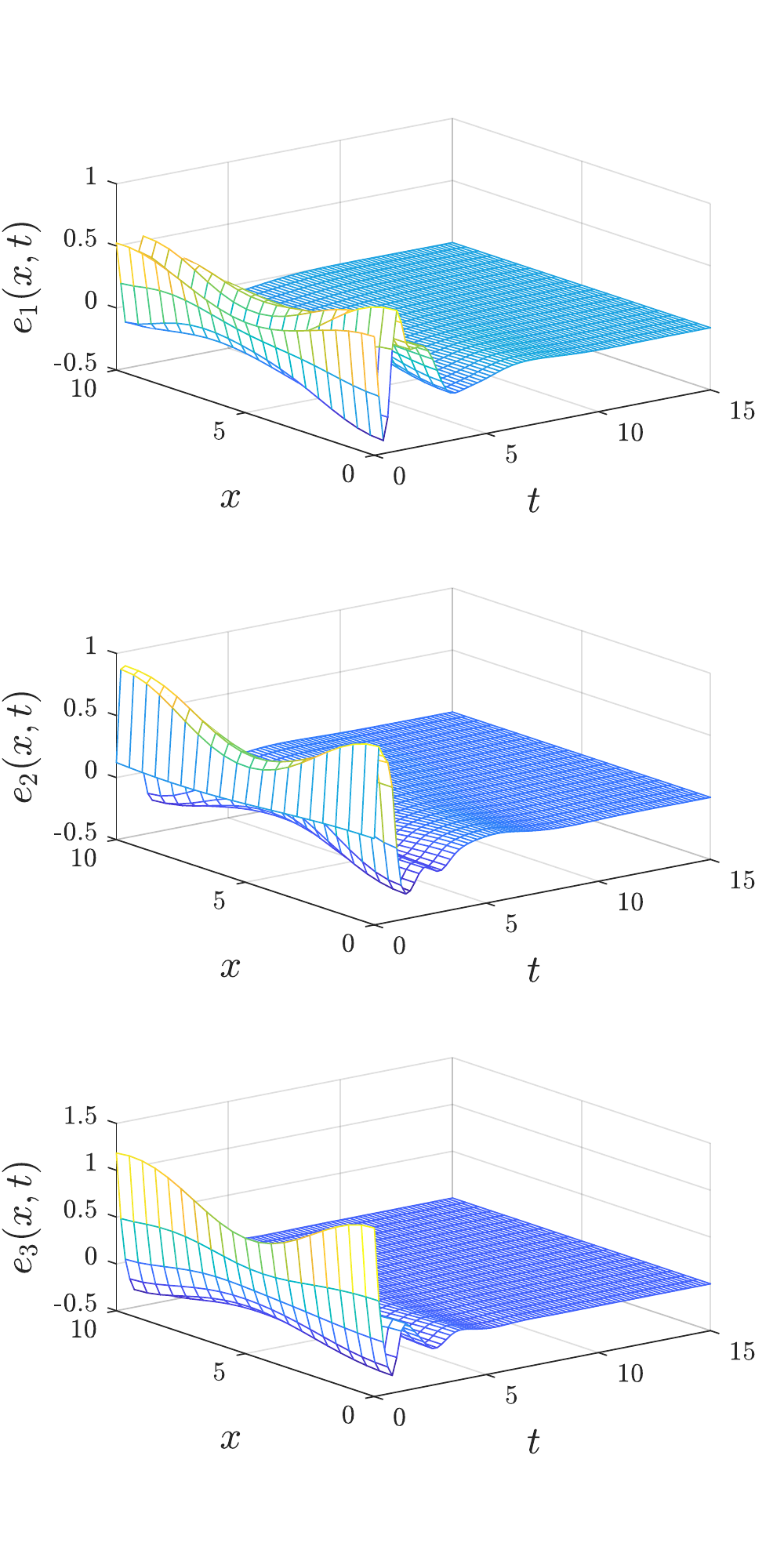}
\caption{The synchronization error as a function of time and space with the
initial conditions in (\protect\ref{4.1}) and (\protect\ref{4.2}),
respectively.}
\label{Fig4}
\end{figure}

\begin{figure}[tbph]
\centering \includegraphics[width=5.5in]{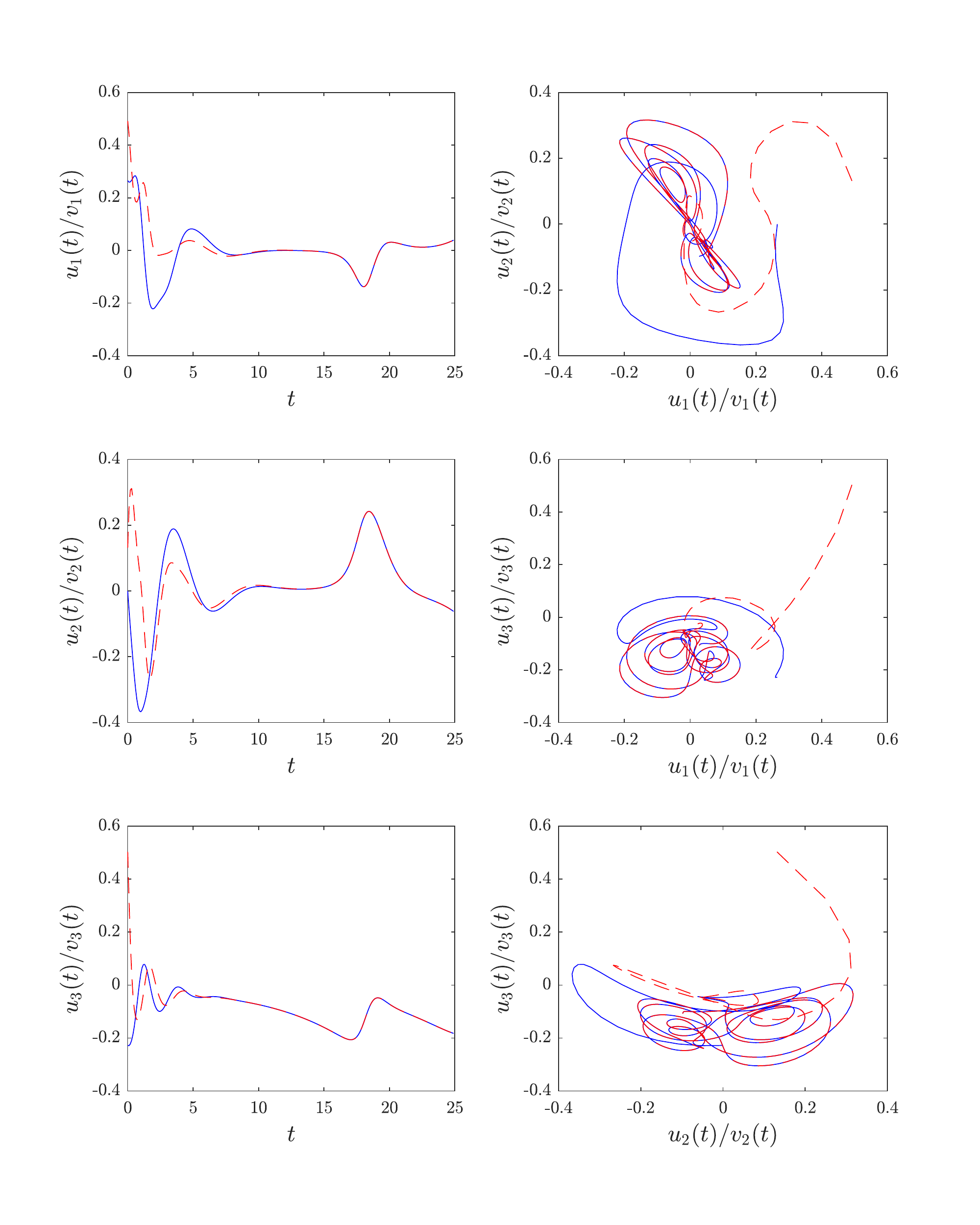}
\caption{Phase plots of the Newton--Leipnik reaction--diffusion system taken
at a specific point in one--dimensional space ($x=5$) for initial conditions
(\protect\ref{4.1}) and (\protect\ref{4.2}).}
\label{Fig5}
\end{figure}

\section{Concluding Remarks}

\label{SecSum}

In this paper, we studied the Newton--Leipnik chaotic system originally
developed to model the rigid body motion through linear feedback (LFRBM).
The Newton--Leipnik has one positive Lyapunov exponent yielding a chaotic
behavior in phase--space for certain values of the parameters. We have
recalled some of the dynamics of the ODE model as reported in the literature
including the equilibrium solutions and their stability. We then proposed a
master--slave configuration of reaction--diffusion Newton--Leipnik type
systems and proposed a control strategy guaranteeing complete
synchronization globally. In order to prove this result, we derived the
synchronization error reaction--diffusion system and studied the asymptotic
stability of its zero solutions. We showed that the zero steady state is
both locally and globally asymptotically stable by means of conventional
stability theory including the Lyapunov direct method. A numerical example
was considered to show the chaotic behavior of the system when diffusion is
considered and established the synchronization of the master and slave
systems using the proposed controls.


\begin{thebibliography}{99}
\bibitem{Abdelmalek2016} S. Abdelmalek, S. Bendoukha, Global asymptotic
stability of a diffusive SVIR epidemic model with immigration of
individuals, Elec. J. Diff. Eqs., Vol. 2016(284), pp. 1--14.

\bibitem{Abdelmalek2018} S. Abdelmalek, S. Bendoukha, Global asymptotic
stability for a SEI reaction--diffusion model of infectious diseases with
immigration, International Journal of Biomathematics, Vol. 27(3) (2018),
1850044.

\bibitem{Afraimovich1983} V. S. Afraimovich, N. N. Verochev, M. I.
Robinovich, Stochastic synchronization of oscillations in dissipative
systems, Radio. Phys. and Quantum Electron, Vol. 29 (1983), pp. 795--803.

\bibitem{Aihara2012} K. Aihara, Chaos and Its Applications, Procedia IUTAM,
Vol. 5 (2012), pp. 199--203.

\bibitem{Banerjee2013} S. Banerjee, L. Rondoni, Applications of Chaos and
Nonlinear Dynamics in Science and Engineering Vol. III, Springer (2013).

\bibitem{Casten1977} R.G. Casten, C. J. Holland, Stability properties of
solutions to systems of reaction--diffusion equations, SIAM J. Appl. Math.,
Vol. 33 (1977), pp. 353--364.

\bibitem{Cross1993} M.C. Cross, P. C. Hohenberg, Pattern formation outside
of equilibrium, Reviews of Modern Physics, Vol. 65(3) (1993), pp. 851--1112.

\bibitem{Curry2012} D.M. Curry, Practical application of chaos theory to
systems engineering, Procedia Computer Science, Vol. 8 (2012), pp. 39--44.

\bibitem{Hu2015} G. Hu, X. Li, Y. Wang, Pattern formation and spatiotemporal
chaos in a reaction--diffusion predator--prey system, Nonlinear Dyn, Vol.
81(1--2) (2015), pp. 265--275.

\bibitem{Jovic2011} B. Jovic, Synchronization Techniques for Chaotic
Communication Systems, Springer-Verlag Berlin Heidelberg (2011).

\bibitem{Kapitaniak2000} T. Kapitaniak, Chaos for Engineers: Theory,
Applications, and Control, Springer (2000).

\bibitem{Lai1994} Y.C. Lai, R.L. Winslow, Extreme sensitive dependence on
parameters and initial conditions in spatio-temporal chaotic dynamical
systems, Physica D: Nonlinear Phenomena, Vol. 74(3--4) (1994), pp. 353--371.

\bibitem{Leipnik1981} R.B. Leipnik, T.A. Newton, Double strange attractors
in rigid body motion with linear feedback control, Phys Lett A, Vol. 86
(1981), pp. 63--7.

\bibitem{Parekh1997} N. Parekh, V.R. Kumar, B.D. Kulkarni, Control of
spatiotemporal chaos: A study with an autocatalytic reaction-diffusion
system, Pramana J. Physics, Vol. 48(1) (1997), pp. 303--323.

\bibitem{Pecora1990} L.M. Pecora, T.L. Carrol, Synchronization in chaotic
systems, Phys. Rev. A, Vol. 64, pp. 821--824, 1990.

\bibitem{Qiang2008} J. Qiang, Chaos control and synchronization of the
Newton--Leipnik chaotic system, Chaos, Solitons and Fractals, Vol. 35
(2008), pp. 814--824.

\bibitem{Wang2007} Y. Wang, J. Cao, Synchronization of a class of delayed
neural networks with reaction--diffusion terms, Physics Letters A, Vol. 369
(2007), pp. 201--211.

\bibitem{Wolf1985} A. Wolf, J. Swift, H. Swinney, J. Vastano, Determining
Lyapunov exponents from a time series, Physica D, Vol. 16 (1985), pp.
285--317..

\bibitem{Yamada1983} T. Yamada, H. Fujisaca, Stability theory of
synchronized motion in coupled-oscillator, Systems. II. Prog. Theor. Phys,
Vol. 70 (1983).

\bibitem{Yamada1984} T. Yamada, H. Fujisaca, Stability theory of
synchronized motion in coupled-oscillator, Systems. III. Prog. Theor. Phys,
Vol. 72 (1984).

\bibitem{Yang2013} X. Yang, J. Cao, Z. Yang, Synchronization of coupled
reaction--diffusion neural networks with time--varying delays via pinning
impulsive control, SIAM J. Cont. Optim., Vol. 51(5) (2013), pp. 3486--3510.

\bibitem{Yu2011} F. Yu, H. Jiang, Global exponential synchronization of
fuzzy cellular neural networks with delays and reaction--diffusion terms,
Neurocomputing, Vol. 74 (2011), pp. 509--515.

\bibitem{Zaitseva2016} M.F. Zaitseva, N.A. Magnitskii, N.B. Poburinnaya,
Control of Space-Time Chaos in a System of Equations of the FitzHugh--Nagumo
Type, Diff. Eqs., Vol. 52(12) (2016), pp. 1585--1593.

\bibitem{Zaitseva2017} M.F. Zaitseva, N.A. Magnitskii, Space--Time Chaos in
a System of Reaction--Diffusion Equations, Diff. Eqs., Vol. 53(11) (2017),
pp. 1519--1523.

\bibitem{Zelik2007} S.V. Zelik, Spatial and dynamical chaos generated by
reaction--diffusion systems in unbounded domains, J. Dyn. Diff. Eqs., Vol.
19(1) (2007), pp. 1--74.
\end{thebibliography}
\end{document}